\def\@setcopyright{}
\def\serieslogo@{} 
\DeclareMathOperator{\rank}{\mathrm{rank}}
\newcommand{\cyc}{\textup{cyc}}
\newcommand{\di}{\textup{div}}
\newcommand{\ab}{\textup{ab}}
\newcommand{\gm}{\GG_m(\QQbar)}
\newcommand{\GQ}{\mcG(\QQbar)}
\newcommand{\GK}{\mcG(k)}
\newcommand{\GQM}{\GQ/\GQ_\tors}
\title[Remarks]{Remarks on R\'{e}mond's generalized Lehmer problems\footnote{\today}\hfill}
\author{Robert Grizzard}
\subjclass[2010]{11G50, 11J25, 11R04, 11R06}
\keywords{Weil height, Lehmer problem}
\address{Department of Mathematics, Lafayette College, Easton, Pennsylvania 18042 USA}
\email{grizzarr@lafayette.edu}
\numberwithin{equation}{section}
\def\imod#1{\allowbreak\mkern10mu({\operator@font mod}\,\,#1)}
\newcommand{\Gal}[0]{\operatorname{Gal}}
 \newtheorem{theorem}{Theorem}[section]
 \newtheorem{corollary}[theorem]{Corollary}
 \newtheorem{conjecture}[theorem]{Conjecture}
 \newtheorem*{claim*}{Claim}
 \newcommand{\mc}{\mathcal}
 \newcommand{\mcG}{\mc{G}}
  \newcommand{\GG}{\mathbb{G}}
 \newcommand{\NN}{\mathbb{N}}
 \newcommand{\QQ}{\mathbb{Q}}
 \newcommand{\ZZ}{\mathbb{Z}}
 \newcommand{\tors}[0]{\textup{tors}}
 \newcommand{\QQbar}[0]{\overline{\QQ}}
 \newcommand{\vep}{\varepsilon}
 \newcommand{\ba}{\boldsymbol a}
 \newcommand{\bb}{\boldsymbol b}
\begin{document}

\bibliographystyle{alpha} 
\begin{abstract} 
 We draw connections between the various conjectures which are included in G. R\'{e}mond's ``generalized Lehmer problems.''  Specifically, we show that the degree one form of his conjecture for $\GG_m$ is, in a sense, almost as strong as the strong form.  Our insights into the conjectures apply a basic result on what may be called Diophantine approximation in the metric induced by the canonical height.  We also present a previously unpublished partial result of R\'{e}mond on his conjectures.  
\end{abstract}

\maketitle 


\section{Introduction}

Throughout this paper we work in a fixed algebraic closure $\QQbar$ of $\QQ$.  
Let $\mcG$ be either a torus $\GG_m^r$ or an abelian variety over $\QQ$, equipped with a height function $h:\mcG(\QQbar) \to [0,\infty)$ satisfying the properties
\begin{enumerate}
\item[(1)] $h(P) = 0$ if and only if $P$ is a torsion point,
\item[(2)] $h([n] P) = |n|h(P)$ for any $n \in \ZZ$, $P \in \mcG(\QQbar)$,
\item[(3)] $h(P+Q) \leq h(P)+h(Q)$ for any $P,Q \in \mcG(\QQbar)$, and 
\item[(4)] $h(P^\sigma) = h(P)$ for any $P \in \GQ$ and any $\sigma \in \Gal(\QQbar/\QQ)$.
\end{enumerate}
For example, if $\mcG = \GG_m$, we can take $h$ to be the usual logarithmic absolute Weil height (hereafter simply ``Weil height''), and it satisfies these properties -- see for example \cite[Section 1.5]{bombierigubler}.
Of course, in this setting, the group law is multiplication, so the properties look different; e.g. (3) becomes: $h(\alpha \beta) \leq h(\alpha)+h(\beta)$ for all $\alpha, \beta \in \QQbar^\times$, and so on.   If $\mcG = \GG_m^r$ is a torus, we take the sum of the Weil heights in each coordinate.  If $\mcG$ is an elliptic curve, or more generally an abelian variety, we may take $h$ to be the square root of a N\'{e}ron-Tate canonical height $\hat{h}$ (see \cite[Chapter 9]{bombierigubler}).  

The quotient group $\GQM,$ being both divisible and torsion-free, is a vector space over $\QQ$.  It is easy to show using properties (1) through (3) that $h$ is well defined on this quotient, and is in fact a norm with respect to the usual absolute value on $\QQ$.

If $\Gamma$ is any subgroup of $\GQ,$ we define a height function $h_\Gamma :  \GQ \to [0,\infty)$ relative to $\Gamma$ as follows:
\begin{equation}\label{hgamma}
h_\Gamma(P) = \inf_{Q \in \Gamma} h(P-Q) = \inf_{Q \in \Gamma} h(P+Q).
\end{equation}
Such functions have been explored previously in \cite{dlmf} in the case where $\mcG=\GG_m$.  If we interpret $h(P-Q)$ as the distance\footnote{Of course, without taking the quotient by the torsion subgroup, this is not a true distance but a semidistance.} between $P$ and $Q$, then $h_\Gamma(P)$ is the distance from $P$ to the subgroup $\Gamma$.  Of course, taking $\Gamma$ to be the trivial group, or more generally any subgroup of torsion elements, reproduces the original height function $h$.
 
For any subfield $k$ of $\QQbar$, we write $G_k = \Gal(\QQbar/k)$, $k^\ab$ for the maximal abelian extension of $k$, and $k^\cyc$ for the (generally smaller) field obtained by adjoining to $k$ all roots of unity.  (Throughout we don't assume $k$ is a finite extension of $\QQ$ unless we say so explicitly.)
If $\Gamma$ is a subgroup of $\mcG(\QQbar)$, then we define $\Gamma^\di$ by 
\begin{equation}
\Gamma^\di = \{P \in \GQ ~\big|~ [n]P \in \Gamma ~\text{for some}~ n \in \NN\}.
\end{equation} 

Recall that the classical Lehmer problem, introduced by D. H. Lehmer in \cite{lehmer33} (in the case $\mcG = \GG_m$), asks whether there exists an absolute constant $c$ such that $h(P) \geq c/[\QQ(P):\QQ]$ for all $P \in \GQ$ not torsion.  The stronger ``relative Lehmer problem'' asks the same, but with $[\QQ(P):\QQ]$ replaced by $[\QQ^{\ab}(P):\QQ^{\ab}]$.  G. R\'{e}mond in \cite{remond} has introduced some interesting generalizations of these problems, and of their variants.  We summarize them here in the case of $\mcG = \GG_m$ for simplicity.  

\begin{conjecture}[R\'{e}mond]\label{gl}
Let $\Gamma = \Gamma^\di$ be a finite-rank\footnote{here the rank of $\Gamma$ is $\rank(\Gamma) = \dim_\QQ(\Gamma \otimes_\ZZ\QQ).$} subgroup of $\GG_m(\QQbar)$, i.e. $\Gamma = \langle g_1,\dots,g_n\rangle^\di$ for some multiplicatively independent elements $g_1,\dots,g_n$.  Let $K_\Gamma = \QQ(\Gamma)$ denote the smallest field of definition for the elements of $\Gamma$.
\begin{itemize}
\item \textbf{Strong form:} There exists a constant $C_\Gamma > 0 $ such that for all $\alpha \in \GG_m(\QQbar) \setminus \Gamma$ we have 
\begin{equation}\label{sf}
h(\alpha) \geq \frac{C_\Gamma}{[K_\Gamma(\alpha):K_\Gamma]}.
\end{equation}
\item \textbf{Weak form:} For any $\vep > 0$, there exists a constant $C_\Gamma(\vep) > 0 $ such that for all $\alpha \in \GG_m(\QQbar) \setminus \Gamma$ we have 
\begin{equation}\label{wf}
h(\alpha) \geq \frac{C_\Gamma(\vep)}{[K_\Gamma(\alpha):K_\Gamma]^{1+\vep}}.
\end{equation}
\item \textbf{Degree one form:} There exists a constant $C_\Gamma^0$ such that for all $\alpha \in \GG_m(K_\Gamma) \setminus \Gamma$ we have 
\begin{equation}\label{d1f}
h(\alpha) \geq C_\Gamma^0.
\end{equation} 
\end{itemize} 
\end{conjecture}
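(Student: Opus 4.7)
The final statement is not a theorem but a family of open conjectures -- taking $\Gamma = \{1\}$ already reduces the strong form to the classical Lehmer problem, unresolved since 1933 -- so what I can offer is a strategy outline rather than a complete proof.

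My plan is to focus on the degree one form, as it is the weakest. Fix $\Gamma = \langle g_1,\dots,g_n\rangle^{\di}$ and set $V = \GG_m(K_\Gamma)/\tors$, viewed as a $\QQ$-vector space equipped with the norm induced by $h$; then $\Gamma/\tors$ is a $\QQ$-subspace of dimension $n$, and the claim becomes that the quotient $V/(\Gamma/\tors)$ carries a discrete norm, i.e., no nonzero coset admits representatives of arbitrarily small height. The qualitative warm-up is the ``Diophantine approximation'' lemma advertised in the abstract: $\Gamma = \Gamma^{\di}$ is closed in $V$ under the topology induced by $h$, so that the quotient is at least Hausdorff. The real difficulty is the quantitative step, namely producing an effective positive lower bound on $h(\alpha)$ for $\alpha \in \GG_m(K_\Gamma)\setminus\Gamma$ depending only on $\Gamma$. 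Northcott's theorem supplies such a bound inside each finite subextension of $K_\Gamma/\QQ$, so one tractable reduction is to show that any small-height $\alpha \in K_\Gamma^\times \setminus \Gamma$ must lie in a controlled finite extension of $\QQ(g_1,\dots,g_n)$, after which the finite-extension case is Northcott.

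The hardest part -- which I do not expect to resolve in full -- is bootstrapping from the degree one form to the strong form. The natural mechanism is to apply the degree one statement to enlarged groups $\Gamma' = \langle \Gamma, \alpha^{\sigma_1},\dots,\alpha^{\sigma_d}\rangle^{\di}$, where the $\sigma_i$ range over coset representatives for $\Gal(\QQbar/K_\Gamma(\alpha))$ in $\Gal(\QQbar/K_\Gamma)$, and then to track how the constant $C_{\Gamma'}^0$ degrades as $\Gamma'$ grows; Galois-averaging then converts a bound for $\Gamma'$ into a bound on $h(\alpha)$ scaled by some function of $d = [K_\Gamma(\alpha):K_\Gamma]$. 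This descent is precisely the comparison the present paper analyzes, and it indicates that no fundamentally new transcendence-theoretic input should be needed once the degree one case is in hand. The essential obstacle, however, is that even the base case $\Gamma = \{1\}$ of the strong form is Lehmer's conjecture itself, so any genuine unconditional progress on the statement as written must either break Lehmer or -- as in the author's approach -- settle for comparing the three forms rather than proving any of them outright.
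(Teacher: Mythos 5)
You correctly recognize that Conjecture~\ref{gl} is an open conjecture rather than a provable theorem (the case $\Gamma=\{1\}$ of the strong form is Lehmer's problem), and the paper indeed does not attempt to prove it but instead compares the forms. However, the bootstrapping mechanism you sketch for passing from the degree one form to the strong form has a genuine gap. You propose applying the degree one statement to enlarged groups $\Gamma' = \langle \Gamma, \alpha^{\sigma_1},\dots,\alpha^{\sigma_d}\rangle^{\di}$ and tracking how $C_{\Gamma'}^0$ degrades, but this cannot work as stated. First, $\alpha$ lies in $\Gamma'$ by construction, so the degree one form for $\Gamma'$ is vacuous for $\alpha$: it only lower-bounds heights of elements of $K_{\Gamma'}^\times\setminus\Gamma'$. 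Second, and more fundamentally, the conjecture supplies a constant $C_{\Gamma'}^0>0$ for each fixed group with no uniformity whatsoever; nothing in any of the three forms controls how $C_{\Gamma'}^0$ varies as $\Gamma'$ grows, so there is no way to ``track the degradation.'' Your Northcott observation also does not help: $K_\Gamma$ has infinite degree over $\QQ$, and the Northcott bound in a finite subextension tends to zero as the subextension grows, so there is no uniform floor to extract that way.

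The paper's actual route (Theorem~\ref{d1iskey}) keeps $\Gamma$ fixed and instead splits on whether $\alpha$ lies in $k^{\di}$, where $k=\QQ(g_1,\dots,g_n)$ is the \emph{number field} generated by chosen generators (not $K_\Gamma$, which is infinite over $\QQ$). If $\alpha\in k^{\di}$, then the least $m$ with $\alpha^m\in K_\Gamma$ satisfies $m=[K_\Gamma(\alpha):K_\Gamma]$ by Capelli--Kneser (valid because $K_\Gamma\supseteq\QQ^{\cyc}$), and then
\begin{equation}
h_\Gamma(\alpha)=\inf_{\beta\in\Gamma}\frac{1}{m}h(\alpha^m\beta^m)\geq \frac{C_\Gamma^0}{[K_\Gamma(\alpha):K_\Gamma]},
\end{equation}
applying the degree one form for the \emph{same} $\Gamma$ to $\alpha^m\beta^m\in K_\Gamma\setminus\Gamma$ -- no enlargement of $\Gamma$, hence no uniformity problem. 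If instead $\alpha\notin k^{\di}$, the degree one hypothesis is not even used; the inequality $V_k(\alpha)\geq\tfrac12 W_k(\alpha)$ of Theorem~\ref{thma} together with the Amoroso--Zannier bound (\ref{amza}) yields a lower bound with exponent $2+\vep$ in $[k^{\ab}(\alpha):k^{\ab}]$. The resulting conclusion is Conjecture~\ref{rem2}, not the strong form itself, which is precisely why the paper presents this as evidence that the degree one form is almost as deep as the strong form rather than a reduction of one to the other.
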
   
 
We summarize how these conjectures relate to existing conjectures and results.  Dropping the condition $\Gamma = \Gamma^\di$ for a moment, if $\Gamma = \{1\}$, the strong form corresponds to Lehmer's problem, the weak form follows from Dobrowolski's theorem (see \cite{dobrowolski}; see also \cite[Section 4.4]{bombierigubler}), and the degree one form is trivial.  If $\Gamma$ is the group of all roots of unity (the only case where $\Gamma = \Gamma^\di$ has rank zero), the strong form becomes the relative Lehmer problem, the weak form follows from a theorem of Amoroso and Zannier (see \cite{amorosozannier2000}), and the degree one form is a result of Amoroso and Dvornicich (see \cite{amorosodvornicich2000}).  In all cases where the rank of $\Gamma$ is positive and finite, all forms of Conjecture \ref{gl} remain open in general.  For example, when $\Gamma = \langle2\rangle^\di,$ the field $K_\Gamma$ is generated by adjoining to the rationals all roots of binomials of the form $x^n-2$, for all $n \in \NN$.  Here the degree one form, which states that the only points of small height in this field are the roots of those binomials, is still open, although Amoroso has made some progress in \cite{amoroso}.
 


The connection to the heights $h_\Gamma = \inf_{\gamma \in \Gamma}h(\alpha/\gamma)$ we have discussed arises because all three forms of Conjecture \ref{gl} are equivalent to the identical statements with $h$ replaced by $h_\Gamma$.  This equivalence follows because, on the one hand, we clearly always have $h(\alpha) \geq h_\Gamma(\alpha)$; on the other hand, for any $\alpha \in \GQ$ and any $\beta \in \Gamma$, we have $K_\Gamma(\alpha\beta) = K_\Gamma(\alpha)$, and $\alpha \in \Gamma$ if and only if $\alpha\beta \in \Gamma$.  In light of this, we highlight the following conjecture for $h_\Gamma$, which is a weaker version of the weak form of Conjecture \ref{gl}.  

\begin{conjecture}\label{rem2}
If $\Gamma = \langle g_1,\dots, g_n\rangle^\di$ is a finite-rank, divisible subgroup of $\GG_m(\QQbar)$, then for any $\vep > 0$ there exists a constant $C_\Gamma(\vep) > 0$ such that for any $\alpha \in \GG_m(\QQbar) \setminus \Gamma$ we have 
\begin{equation}\label{hg2}
h_\Gamma(\alpha) \geq \frac{C_\Gamma(\vep)}{[k^{\cyc}(\alpha):k^{\cyc}]^{2 + \vep}},
\end{equation} 
where $k$ is the number field $\QQ(g_1,\dots,g_n)$.
\end{conjecture}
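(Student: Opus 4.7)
The plan is to reduce Conjecture \ref{rem2} to the Amoroso--Zannier relative Lehmer theorem by means of a Diophantine approximation in the height metric on $\Gamma$. Let $\alpha \in \GG_m(\QQbar) \setminus \Gamma$, set $d = [k^{\cyc}(\alpha):k^{\cyc}]$, and write $\epsilon = h_\Gamma(\alpha)$.

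The first step is to produce, for a parameter $N \in \NN$, an element $\gamma = g_1^{a_1/N}\cdots g_n^{a_n/N} \in \Gamma$ (for a fixed choice of $N$-th roots of the $g_i$, and some $a_i \in \ZZ$) satisfying
\[
h(\alpha\gamma^{-1}) \leq \epsilon + C/N,
\]
where $C$ depends only on $n$ and $h(g_1),\dots,h(g_n)$. This is the ``Diophantine approximation in the $h$-metric'' alluded to in the abstract: pick a near-infimum $\gamma_0 = \prod g_i^{c_i} \in \Gamma$ with $h(\alpha\gamma_0^{-1}) \leq 2\epsilon$, round each rational $c_i$ to the nearest multiple of $1/N$, and apply the triangle inequality together with property (2) of $h$ coordinate by coordinate.

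Setting $\beta := \alpha\gamma^{-1}$, the assumption $\alpha \notin \Gamma = \Gamma^\di$ guarantees that $\beta$ is not a root of unity. Since $\gamma$ lies in $k^{\cyc}(g_1^{1/N},\dots,g_n^{1/N})$, which has degree at most $N^n$ over $k^{\cyc}$, we have $[k^{\cyc}(\beta):k^{\cyc}] \leq dN^n$, and the Amoroso--Zannier theorem supplies
\[
h(\beta) \geq \frac{c(\vep')}{(dN^n)^{1+\vep'}}.
\]
Combined with the approximation estimate, this yields
\[
\epsilon + C/N \geq \frac{c(\vep')}{(dN^n)^{1+\vep'}},
\]
and one then hopes to optimize $N$ as a function of $d$ so as to conclude $\epsilon \geq C(\vep)/d^{2+\vep}$.

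The main obstacle is precisely this optimization. Taking $N$ large makes the field-degree factor $N^n$ swamp the Amoroso--Zannier gain; taking $N$ small makes the approximation error $C/N$ dominate $\epsilon$ on the left. Even in the rank-one case $n=1$, balancing $\epsilon \sim C/N$ forces $N$ of order $1/\epsilon$, and the resulting inequality $\epsilon \geq c(\vep')\epsilon^{1+\vep'}/d^{1+\vep'}$ rearranges to an \emph{upper} bound on $\epsilon$, not the desired lower bound. A sharper approximation lemma is therefore needed, in which the degree $[k^{\cyc}(\gamma):k^{\cyc}]$ is controlled by a single parameter $D$ while the approximation error decays faster than $1/D$ — perhaps via a geometry-of-numbers argument for the norm $h$ on $\Gamma \otimes \QQ$, or by an inductive reduction on $\rank(\Gamma)$ exploiting the Kummer-theoretic structure of the relevant extensions of $k^{\cyc}$. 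Producing such a sharper approximation appears to be the substantive difficulty, and is presumably why Conjecture \ref{rem2} remains open in general.
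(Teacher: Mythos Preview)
The statement you are attempting is a \emph{conjecture}; the paper does not establish it unconditionally, so there is no unconditional proof to match. What the paper does prove is (i) an unconditional weaker bound (Theorem~\ref{doubleweak}) with exponent $\rank(\Gamma)+1+\vep$ in place of $2+\vep$, and (ii) the full Conjecture~\ref{rem2} \emph{conditionally} on the degree-one form of Conjecture~\ref{gl} (Theorem~\ref{d1iskey}).

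Your strategy---approximate an arbitrary element of $\Gamma$ by one with bounded denominator, then apply a Dobrowolski/Amoroso--Zannier bound---is exactly the template behind the unconditional result (i). Two refinements in R\'emond's execution remove the specific obstruction you flag, but still only yield exponent $n+1+\vep$:
\begin{itemize}
\item Rather than adjoining $N$-th roots of the $g_i$ (costing a degree factor $N^n$), note that for $\gamma=\prod g_i^{a_i/N}$ one has $(\alpha\gamma^{-1})^N=\alpha^N\prod g_i^{-a_i}\in k(\alpha)$, whence $h(\alpha\gamma^{-1})=\tfrac1N h\bigl((\alpha\gamma^{-1})^N\bigr)\geq c(\vep')/\bigl(N\,d^{1+\vep'}\bigr)$ with $d$ the degree of $\alpha$ alone. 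The denominator enters linearly, not as $N^n$.
\item Rather than naive rounding (error $\asymp 1/N$), use Dirichlet's simultaneous approximation: for any $\ba\in\QQ^n$ and $Q>1$ there exist $m\leq Q^n$ and $\bb\in\tfrac1m\ZZ^n$ with $\|\ba-\bb\|_\infty\leq 1/(Qm)$.
\end{itemize}
Together these give $f(\ba)\geq \tfrac{1}{mQ}\bigl(Qc'/d^{1+\vep'}-L_\Gamma\bigr)\geq Q^{-(n+1)}\bigl(Qc'/d^{1+\vep'}-L_\Gamma\bigr)$, and optimizing $Q\asymp d^{1+\vep'}$ yields $h_\Gamma(\alpha)\gg d^{-(n+1)(1+\vep')}$. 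So your diagnosis is correct: this circle of ideas produces exponent $n+1+\vep$, and $2+\vep$ is out of reach by approximation alone when $n\geq 2$.

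The paper's conditional route to Conjecture~\ref{rem2} is entirely different. It rests on the inequality $V_k(\alpha)\geq\tfrac12 W_k(\alpha)$ of Theorem~\ref{thma}, where $W_k(\alpha)=\max_{\sigma\in G_k} h(\alpha^\sigma/\alpha)$. When $\alpha\notin k^\di$, some $\alpha^\sigma/\alpha$ is non-torsion of degree at most $D(D-1)$ over $k^{\ab}$ (with $D=[k^{\ab}(\alpha):k^{\ab}]$), and Amoroso--Zannier applied to \emph{that single element} gives the exponent $2+\vep$ directly---no optimization, and no dependence on $\rank(\Gamma)$. The remaining case $\alpha\in k^\di\setminus\Gamma$ is where the degree-one hypothesis is spent, via Capelli--Kneser. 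The ``sharper approximation lemma'' you speculate about is therefore not how the paper proceeds; the saving comes instead from replacing the distance to $\Gamma$ by the distance to the larger group $\mcG(k)^\di$ and controlling the latter through a Galois-twisted ratio.
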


In this direction R\'{e}mond announced the following result in a 2013 talk\footnote{R\'{e}mond's talk, \emph{Generalized Lehmer Problems}, was given on November 25, 2013 as a part of the workshop ``Heights in Diophantine geometry, group theory and additive combinatorics'' held at the Erwin Schr\"{o}dinger Institute in Vienna.}.  We reproduce it here with permission; the proof is given in the next section.

\begin{theorem}[R\'{e}mond]\label{doubleweak}
If $\Gamma = \Gamma^\di$ is a finite-rank subgroup of $\GG_m(\QQbar)$, then for any $\vep > 0$ there exists a constant $C_\Gamma(\vep) > 0$ such that for any $\alpha \in \GG_m(\QQbar) \setminus \Gamma$ we have 
\begin{equation}\label{hg1ge}
h_\Gamma(\alpha) \geq \frac{C_\Gamma(\vep)}{[\QQ(\alpha):\QQ]^{\rank(\Gamma) + 1 + \vep}}.
\end{equation}
\end{theorem}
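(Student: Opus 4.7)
The plan is to combine Dirichlet's simultaneous approximation theorem with Dobrowolski's theorem, exploiting the following trick: raising to a well-chosen integer power $m$ collapses a high-degree rational-exponent approximation back into a fixed, bounded-degree field, so that Dobrowolski gives a usable lower bound.

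Write $\Gamma = \langle g_1,\dots,g_r\rangle^{\di}$ with $g_1,\dots,g_r$ multiplicatively independent, and set $K_0 := \QQ(g_1,\dots,g_r)$, $D_0 := [K_0:\QQ]$, and $C := \sum_i h(g_i)$.  Let $\alpha \in \GG_m(\QQbar) \setminus \Gamma$ with $d := [\QQ(\alpha):\QQ]$ and $\delta := h_\Gamma(\alpha)$.  Fix some $\gamma \in \Gamma$ satisfying $h(\alpha/\gamma) \leq 2\delta$, and write $\gamma = \zeta \prod_i g_i^{q_i}$ with $q_i \in \QQ$ and $\zeta$ a root of unity.

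For a parameter $Q \geq 1$ to be chosen, Dirichlet's simultaneous approximation theorem produces integers $1 \leq m \leq Q^r$ and $m_1,\dots,m_r$ with $|q_i - m_i/m| \leq 1/(mQ)$ for every $i$.  Put $\gamma' := \prod_i g_i^{m_i/m}$; by properties (2) and (3) of the height,
\[
h(\gamma/\gamma') \leq \sum_i \bigl|q_i - m_i/m\bigr| h(g_i) \leq \frac{C}{mQ},
\]
and so $h(\alpha/\gamma') \leq 2\delta + C/(mQ)$.  The crucial observation is that
\[
(\alpha/\gamma')^m \;=\; \alpha^m \Big/ \prod_i g_i^{m_i} \;\in\; K_0(\alpha),
\]
which has degree at most $dD_0$ over $\QQ$, because the $m$-th power eliminates the $m$-th roots appearing in $\gamma'$.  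Since $\alpha \notin \Gamma = \Gamma^{\di}$, this element is not a root of unity, so fixing $\vep_0 := \vep/(r+1)$ and applying Dobrowolski's theorem yields
\[
m\cdot h(\alpha/\gamma') \;=\; h\bigl((\alpha/\gamma')^m\bigr) \;\geq\; \frac{c_0(\vep_0)}{(dD_0)^{1+\vep_0}}.
\]
Combining with the previous upper bound on $h(\alpha/\gamma')$ gives the key inequality $2m\delta + C/Q \geq c_0(\vep_0)/(dD_0)^{1+\vep_0}$.

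To finish, I would take $Q$ of order $d^{1+\vep_0}$, with a constant chosen so that the error term $C/Q$ is at most half of the Dobrowolski bound above.  Then $2m\delta$ must absorb the other half, and using $m \leq Q^r$ of order $d^{r(1+\vep_0)}$ gives $\delta \geq C_\Gamma(\vep)/d^{(r+1)(1+\vep_0)} = C_\Gamma(\vep)/d^{r+1+\vep}$, which is the desired bound.  The conceptual hurdle is spotting the necessity of pairing Dirichlet's theorem with the $m$-th power trick: working with $\gamma'$ directly would leave $[\QQ(\alpha/\gamma'):\QQ]$ as large as $dD_0 m$ and insert an extra factor of $m$ into the Dobrowolski bound that cannot be balanced against the approximation error $C/(mQ)$, whereas the $m$-th power returns us to the fixed field $K_0(\alpha)$, keeping the Dobrowolski side of the inequality strong enough.
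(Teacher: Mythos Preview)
Your argument is essentially the paper's own proof: both combine Dirichlet's simultaneous approximation theorem with Dobrowolski's theorem via the ``$m$-th power trick'' that lands $(\alpha/\gamma')^m$ back in the fixed field $K_0(\alpha)$, and the balancing of the approximation error $C/Q$ against the Dobrowolski bound is carried out in the same way (the paper packages this as a Lipschitz estimate for the function $f(\mathbf{a}) = h(\alpha\, g_1^{a_1}\cdots g_r^{a_r})$ on $\QQ^r$, but the content is identical). One small technical point: choosing $\gamma$ with $h(\alpha/\gamma) \leq 2\delta$ tacitly assumes $\delta > 0$, which is part of what is being proved; the paper avoids this by bounding $f(\mathbf{a})$ uniformly over all $\mathbf{a} \in \QQ^r$, and you can fix it just as easily by taking $h(\alpha/\gamma) \leq \delta + \eta$ and letting $\eta \to 0$ at the end (the bound $m \leq Q^r$ is independent of $\eta$, so $m\eta \to 0$).
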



The following result, which is the main original observation of this paper, shows that the degree one form of Conjecture \ref{gl} implies that we can strengthen (\ref{hg1ge}) to (\ref{hg2}), and gives partial progress toward the strong form of Conjecture \ref{gl}.

\begin{theorem}\label{d1iskey}
For a finite-rank subgroup $\Gamma=\Gamma^\di$ of  $\GG_m(\QQbar)$, the degree one form of Conjecture \ref{gl} implies Conjecture \ref{rem2}.  

More precisely, let $\Gamma = \langle g_1,\dots,g_n\rangle^\di,$ and assume the degree one form of Conjecture \ref{gl} is true for the group $\Gamma$, so there is a constant $C_\Gamma^0$ such that (\ref{d1f}) holds.  Set $k = \QQ(g_1,\dots,g_n)$, and let $\alpha \in \GG_m(\QQbar) \setminus \Gamma.$  Then there exist constants $C_\Gamma(\vep)$ for all $\vep> 0$ such that
\begin{enumerate}
\item[(i)] if $\alpha \not \in (k^\times)^\di$, then for each $\vep >0$ we have
\begin{equation}\label{part1}
h_\Gamma(\alpha) \geq \frac{C_\Gamma(\vep)}{[k^\ab(\alpha):k^\ab]^{2 + \vep}};
\end{equation}
\item[(ii)] if $\alpha \in (k^\times)^\di$, then we have
\begin{equation}\label{part2}
h(\alpha) \geq h_\Gamma(\alpha) \geq \frac{C_\Gamma^0}{[K_\Gamma(\alpha):K_\Gamma]}.
\end{equation}
\end{enumerate}
\end{theorem}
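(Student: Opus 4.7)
The two parts reflect a natural dichotomy: in (ii), $\alpha$ is a ``radical over $k$'' and the degree one form applies through Kummer theory; in (i), $\alpha$ is genuinely non-radical, and one combines an Amoroso--Zannier-type bound over $k^\ab$ with a Diophantine approximation argument in the height metric.

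\textbf{Part (ii).} Suppose $\alpha \in (k^\times)^\di$, so that $\alpha^n \in k^\times \subseteq K_\Gamma$ for some $n \geq 1$. Since $\Gamma = \Gamma^\di$ is divisible, $\Gamma$ contains every root of unity, and hence $\mu_n \subseteq K_\Gamma$. The polynomial $x^n - \alpha^n \in K_\Gamma[x]$ therefore splits over $K_\Gamma(\alpha)$, so $K_\Gamma(\alpha)/K_\Gamma$ is a cyclic Kummer extension. Setting $d := [K_\Gamma(\alpha):K_\Gamma]$, cyclic Kummer theory gives $\alpha^d \in K_\Gamma$. If $\alpha^d$ lay in $\Gamma$, then $\alpha \in \Gamma^\di = \Gamma$, contradicting $\alpha \in \GG_m(\QQbar) \setminus \Gamma$; therefore $\alpha^d \in K_\Gamma \setminus \Gamma$, and the degree one form hypothesis yields $h(\alpha^d) \geq C_\Gamma^0$. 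Dividing by $d$ and using $h_\Gamma \leq h$ produces (\ref{part2}).

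\textbf{Part (i).} Suppose now $\alpha \notin (k^\times)^\di$, and write $D := [k^\ab(\alpha):k^\ab]$. By the definition of $h_\Gamma(\alpha)$, pick $\gamma \in \Gamma$ with $h(\alpha\gamma^{-1})$ close to $h_\Gamma(\alpha)$. A priori, this $\gamma$ can have a very large ``denominator'' $M := \min\{m \geq 1 : \gamma^m \in \langle g_1, \ldots, g_n\rangle\}$, forcing the bound $[k^\ab(\gamma):k^\ab] \leq M$ to be uncontrolled. The plan is to invoke a Dirichlet-type Diophantine approximation lemma in the height metric --- the ``basic result'' advertised in the abstract --- which allows one to replace $\gamma$ by a nearby $\tilde\gamma \in \Gamma$ whose denominator is bounded by a chosen parameter $Q$, at the cost of a controlled height penalty depending on $Q$ and $\Gamma$. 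The quotient $\beta := \alpha\tilde\gamma^{-1}$ satisfies $[k^\ab(\beta):k^\ab] \leq DQ$, and because $\alpha \notin (k^\times)^\di$ while $\Gamma \subseteq (k^\times)^\di$, the element $\beta$ is not torsion; hence the Amoroso--Zannier theorem \cite{amorosozannier2000} applies to give a lower bound $h(\beta) \geq C(\vep)/(DQ)^{1+\vep}$. Combining this with $h(\beta) \leq h_\Gamma(\alpha) + (\text{approximation error in } Q)$ and optimizing $Q$ as a function of $D$ produces (\ref{part1}) with the desired exponent $2+\vep$.

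\textbf{Main obstacle.} The technical core is the Diophantine approximation step of part (i) together with the careful balancing of parameters: the lemma must be applied in a sharp enough form so that the trade-off between the denominator of $\tilde\gamma$ (which inflates the degree) and the approximation error (which inflates the height) produces exactly the claimed exponent $2+\vep$. Part (ii), by contrast, is a clean application of cyclic Kummer theory once one recognizes $\alpha^d$ as the correct element to which to feed the degree one form.
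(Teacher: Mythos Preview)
Your part (ii) is essentially the paper's argument, via Kummer/Capelli--Kneser. One small slip: from $h(\alpha^d)\ge C_\Gamma^0$ you get $h(\alpha)\ge C_\Gamma^0/d$, but ``using $h_\Gamma\le h$'' goes the wrong way for the middle inequality in \eqref{part2}. The fix is immediate: for any $\beta\in\Gamma$ one has $K_\Gamma(\alpha\beta)=K_\Gamma(\alpha)$ and $(\alpha\beta)^d\in K_\Gamma\setminus\Gamma$, so the same bound applies to every $h(\alpha\beta)$ and hence to $h_\Gamma(\alpha)$.

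Part (i), however, does not work as written. Your Dirichlet approximation inside $\Gamma$ is exactly the mechanism behind R\'emond's Theorem~\ref{doubleweak}, and it produces an exponent that depends on $n=\rank\Gamma$, not the exponent $2+\vep$. Concretely: simultaneous Dirichlet approximation of $(a_1,\dots,a_n)\in\QQ^n$ gives a common denominator $m\le Q^n$ with error $\le 1/(Qm)$; writing $\bb\in\frac{1}{m}\ZZ^n$ and $\beta=\alpha g_1^{b_1}\cdots g_n^{b_n}$, Amoroso--Zannier yields $h(\beta)\ge c(k,\vep)/(mD^{1+\vep})$ (using that $\beta^m\in k(\alpha)$), and balancing against the error term gives $h_\Gamma(\alpha)\gtrsim D^{-(n+1)(1+\vep)}$. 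Your claimed bound $[k^\ab(\beta):k^\ab]\le DQ$ with ``denominator $\le Q$'' hides this: in $n$ variables, bounding the denominator by $Q$ forces the approximation error to behave like $Q^{-1-1/n}$, and the optimization again yields exponent $n+1$. So for $n\ge 2$ your route cannot reach \eqref{part1}.

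The paper's key idea for (i) is different and is the ``basic result'' alluded to in the abstract, namely Theorem~\ref{thma}: instead of approximating within the rank-$n$ group $\Gamma$, one passes to the (infinite-rank) group $(k^\times)^\di\supseteq\Gamma$ and uses the Galois-theoretic inequality
\[
h_\Gamma(\alpha)\ \ge\ V_k(\alpha)\ \ge\ \tfrac12\,W_k(\alpha)\ =\ \tfrac12\,h(\tau\alpha/\alpha)
\]
for some $\tau\in G_k$. Since $\alpha\notin (k^\times)^\di$, the ratio $\tau\alpha/\alpha$ is non-torsion, and crucially $[k^\ab(\tau\alpha/\alpha):k^\ab]\le D(D-1)$; applying Amoroso--Zannier to this single element gives the exponent $2+\vep$ with no dependence on $\rank\Gamma$. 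This Galois-conjugate trick, not a Dirichlet lemma, is what you are missing.
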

Of course the inequality $h(\alpha) \geq h_\Gamma(\alpha)$ is trivial by definition, but is included to emphasize that the bound we achieve in case (i) is the one expected by the strong form of Conjecture \ref{gl}.  This is stronger than the conclusion of Conjecture \ref{rem2} of course, because $k^\cyc$ is contained in both $k^\ab$ and $K_\Gamma$.  One may infer from this result that, in a sense, the degree one part of Conjecture \ref{gl} may be as deep as the full conjecture.

The rest of the paper is organized as follows.  Section \ref{remproofsec} contains the proof of Theorem \ref{doubleweak}.  Section \ref{ineqsec} introduces a simple inequality (Theorem \ref{thma}) relating to $h_\Gamma$, which is essentially the same as \cite[Theorem 1.1]{gv}.  In that paper attention is restricted to $\mcG = \GG_m$; we reproduce the proof here in the more general case for convenience.  In Section \ref{nfsec} we illustrate how to make Theorem \ref{thma} effective when the field in question is a number field by applying ``Dobrowolski type'' estimates, and in the final section we use these effective versions to prove Theorem \ref{d1iskey}.


\subsection*{Acknowledgments}
The author wishes to thank Ga\"{e}l R\'{e}mond for permission to reproduce his results here, as well as for very useful conversations and notes on an early draft of this paper, and Joseph Gunther, who also provided notes on an early draft. 

\section{Proof of R\'{e}mond's theorem (Theorem \ref{doubleweak})}\label{remproofsec}
The following proof was sketched in the aforementioned 2013 talk of R\'{e}mond, and is based on the author's notes from that talk.
\begin{proof} 
Let $n = \rank(\Gamma)$ and $\Gamma = \langle g_1,\dots,g_n\rangle^\di$, where $g_1,\dots,g_n$ are multiplicatively independent, and set $k = \QQ(g_1,\dots,g_n).$  Fix $\alpha \in \GG_m(\QQbar) \setminus \Gamma$, and define a function $f:\QQ^n \to [0,\infty)$ by
\begin{equation}
f(\ba) = f(a_1,\dots,a_n) = h\left(\alpha g_1^{a_1}\cdots g_n^{a_n}\right),
\end{equation} 
so that $h_\Gamma(\alpha) = \inf_{\ba \in \QQ^n} f(\ba)$.  Notice that the basic properties of the height ensure that $f$ is well-defined, and satisfies a Lipschitz condition
\begin{equation}
|f(\ba)-f(\bb)| \leq L_\Gamma \|\ba-\bb\|_\infty,
\end{equation}
for any $\ba = (a_1,\dots,a_n)$ and $\bb = (b_1,\dots,b_n) \in \QQ^n$, where $L_\Gamma$ is a positive constant depending only on $\Gamma$, and $\|\ba-\bb\|_\infty := \max_{1 \leq i \leq n} |a_i-b_i|$.  Indeed, using the reverse triangle inequality for the height norm and other basic properties of the height, we have
\begin{align}
|f(\ba) - f(\bb)| &= |h(\alpha g_1^{a_1}\cdots g_n^{a_n}) - h(\alpha g_1^{b_1}\cdots g_n^{b_n})|  \leq  h\left(g_1^{a_1-b_1} \cdots g_n^{a_n-b_n}\right) \\
&\leq \sum_{i=1}^n |a_i-b_i|h(g_i) \leq \left(\sum_{i=1}^n h(g_i)\right)\|\ba-\bb\|_\infty. 
\end{align}

Using Dobrowolski's Theorem (inequality (\ref{dob})), we know that for any $\vep >0$ there is a constant $c(\vep) >0$ such that, if $\bb \in \frac{1}{m} \ZZ^n$, then
\begin{equation}\label{dob1}
f(\bb) = \frac{1}{m}h\left(\alpha^mg_1^{mb_1}\cdots g_n^{mb_n}\right) \geq \frac{c(\vep)}{m[k(\alpha):\QQ]^{1+\vep}}. 
\end{equation}

Fix an arbitrary $\ba \in \QQ^n$, and let $Q$ be any real number greater than 1.  By Dirichlet's approximation theorem (see for example \cite[Chapter VIII, Proposition 4]{coppel}, the case of an $n \times 1$ matrix), there exists a positive integer $m \leq Q^n$ and an element $\bb \in \frac{1}{m}\ZZ^n$ such that $\|\ba-\bb\|_\infty \leq \frac{1}{Qm}.$

Now we have
 
 \begin{align}
 f(\ba) &\geq f(\bb) - |f(\bb)-f(\ba)| \geq \frac{c(\vep)}{m[k(\alpha):\QQ]^{1+\vep}} - L_\Gamma \|\ba - \bb\|_\infty\\ &\geq \frac{c'(\Gamma, \vep)}{m[\QQ(\alpha):\QQ]^{1+\vep}} - \frac{L_\Gamma}{Qm} 
 \geq \frac{1}{Q^{n+1}} \left(\frac{Qc'(\Gamma,\vep)}{[\QQ(\alpha):\QQ]^{1+\vep}} -L_\Gamma \right) , 
 \end{align}
 where $c'(\Gamma,\vep) = c(\vep)/[k:\QQ]^{1+\vep}$.  Choosing $Q = (1+L_\Gamma)[\QQ(\alpha):\QQ]^{1+\vep}/c'(\Gamma,\vep)$, the above yields
 \begin{align}
f(\ba) &\geq \frac{1}{Q^{n+1}} = \frac{c'(\Gamma,\vep)^{n+1}}{(1+L_\Gamma)^{n+1}[\QQ(\alpha):\QQ]^{n+1+(n+1)\vep}},~\textup{and so}\\ 
f(\ba) &\geq \frac{c'(\Gamma,\vep/(n+1))^{n+1}}{(1+L_\Gamma)^{n+1}[\QQ(\alpha):\QQ]^{n+1+\vep}},
 \end{align}
 by replacing $\vep$ by $\vep/(n+1)$.
 This gives the desired result with
 \begin{equation}
 C_\Gamma(\vep) = \frac{c'(\Gamma,\vep/(n+1))^{n+1}}{(1+L_\Gamma)^{n+1}}. 
 \end{equation}
 \end{proof}


\section{A simple but important height inequality}\label{ineqsec}
For this section we return to the general case where $\mcG$ is a group equipped with a height function $h$ satisfying the properties listed at the beginning of the paper, and $k$ will denote an arbitrary subfield of $\QQbar$. For $P \in \mcG(\QQbar)$ and a subfield $k \subseteq \QQbar$ we define 
\begin{equation}
V_k(P) = h_{\GK^\di}(P), 
\end{equation}

and also we define
\begin{equation}\label{widthk}
W_k(P) = \max \left\{h(P^\sigma - P) ~\big|~ \sigma \in G_k\right\}.
\end{equation}

Notice that $W_k(P) \geq 0,$ and $W_k(P) = 0$ if and only if all conjugates $P^\sigma$ of $P$ over $k$ differ from $P$ by a torsion point, which is equivalent to saying that some multiple of $P$ lies in $\mcG(k)$, i.e. that $P \in \GK^\di$.  Indeed, if all Galois conjugates $P_1,\dots P_n$ of $P$ over $k$ differ from $P$ by torsion points, we must have that some multiple of the ``norm'' $N = \sum_i P_i \in \GK$ is equal to a multiple of $P$.

The following basic result relates $V_k$ and $W_k$.  It is the same result presented in \cite[Theorem 1.1]{gv}, stated there only for $\mcG = \GG_m.$
\begin{theorem}\label{thma}
Let $k$ be a subfield of $\QQbar$, and let $P \in \GQ$.  Then
\begin{equation}
W_k(P) \geq V_k(P) \geq \frac{1}{2}W_k(P).
\end{equation}
\end{theorem}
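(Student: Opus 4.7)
My plan is to prove each of the two inequalities in Theorem \ref{thma} separately, using only the axioms (1)--(4) of the height $h$. Throughout, I would work modulo torsion, i.e.\ in the $\QQ$-vector space $\GQM$, on which $h$ descends to a norm and division by positive integers is unambiguous. A key preliminary observation is that every $Q \in \GK^\di$ is $G_k$-fixed modulo torsion: if $[n]Q \in \GK$, then $[n](Q^\sigma - Q) = 0$, so $Q^\sigma - Q$ is a torsion point for every $\sigma \in G_k$.

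For the upper bound $V_k(P) \leq W_k(P)$ I would use a ``norm trick.''  Let $P = P_1, \dots, P_m$ be the distinct $G_k$-conjugates of $P$ (finitely many, since $k(P)/k$ is a finite extension), and set $N = \sum_{i=1}^m P_i$.  Then $N$ is $G_k$-invariant, so $N \in \GK$, and any element $Q \in \GQ$ whose class in $\GQM$ equals $\tfrac{1}{m}N$ lies in $\GK^\di$.  Since $[m](P-Q) = \sum_{i=1}^m (P - P_i)$ in $\GQM$, property (2) together with the triangle inequality (3) gives
\begin{equation}
h(P-Q) \;=\; \frac{1}{m}\,h\Bigl(\sum_{i=1}^m (P-P_i)\Bigr) \;\leq\; \frac{1}{m}\sum_{i=1}^m h(P-P_i) \;\leq\; W_k(P),
\end{equation}
whence $V_k(P) \leq h(P-Q) \leq W_k(P)$.

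For the lower bound $V_k(P) \geq \tfrac{1}{2}W_k(P)$, I would let $Q \in \GK^\di$ and $\sigma \in G_k$ be arbitrary and use the decomposition $P^\sigma - P = (P-Q)^\sigma - (P-Q) + (Q^\sigma - Q)$.  Applying subadditivity (3), then Galois invariance (4) on $h((P-Q)^\sigma)$, and property (1) to kill the torsion term $Q^\sigma - Q$, yields $h(P^\sigma - P) \leq 2\,h(P-Q)$.  Taking the maximum over $\sigma$ gives $W_k(P) \leq 2\,h(P-Q)$, and the infimum over $Q \in \GK^\di$ then yields $W_k(P) \leq 2V_k(P)$.

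I do not anticipate any serious obstacle: both inequalities follow formally from the axioms combined with an averaging argument over the Galois orbit.  The only point requiring mild care is the interpretation of $\tfrac{1}{m}N$ and the verification that it provides a legitimate test element in the infimum defining $V_k(P)$; this is handled by working in the torsion-free quotient $\GQM$ where division is unambiguous, and then lifting arbitrarily to $\GQ$.
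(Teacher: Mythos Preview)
Your proposal is correct and follows essentially the same approach as the paper: the upper bound via the Galois-norm averaging trick, and the lower bound via the decomposition $P^\sigma - P = (P-Q)^\sigma - (P-Q)$ modulo torsion together with subadditivity and Galois invariance of $h$. The only cosmetic difference is that the paper parametrizes $\GK^\di$ by pairs $(Q,m)$ with $Q \in \GK$ and works with $\tfrac{1}{m}h([m]P-Q)$, whereas you take $Q \in \GK^\di$ directly and invoke the observation that $Q^\sigma - Q$ is torsion; these are two ways of saying the same thing.
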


\begin{proof}

For any $P \in \GQ$ we have
\begin{equation}\label{hkinf}
V_k(P) = \inf_{Q \in \GK^\di} h(P-Q) = \inf_{\substack{Q \in \GK \\ m \in \NN}} h\left(P-\left[\frac{1}{m}\right]Q\right) = \inf_{\substack{Q \in \GK \\ m \in \NN}} \frac{1}{m}h\big([m]P-Q\big).
\end{equation}
Note that, using basic height properties, one can see that the quantity $h\left(P-\left[\frac{1}{m}\right]Q\right)$ above is independent of the choice of $\left[\frac{1}{m}\right] Q$, which denotes a element mapped to $Q$ by $[m]$. 

Let $\tau$ be an element of $G_{k}$ such that $W_{k}(P) = h(P^\tau - P)$.  Then for any $m \in \NN$ and $Q \in \GK$ (so $Q^\tau = Q$), we have
\begin{align}
W_{k}(P) &= \frac{1}{m} h\big(([m]P)^\tau -[m]P\big) = \frac{1}{m} h\big(([m]P-Q)^\tau - (Q- [m]P)\big)\\
&\leq \frac{1}{m} h\big( ([m]P-Q)^\tau \big) + \frac{1}{m} h\big(Q-[m]P\big) = \frac{2}{m}h \big([m]P-Q\big).
\end{align}
Taking the infimum as in (\ref{hkinf}), we find that $V_k(P) \geq \frac{1}{2}W_k(P).$

For the other inequality, let $P = P_1,\dots,P_n$ denote the Galois conjugates of $P$ over $k$.  Then $N := P_1+\cdots+P_n$ lies in $\GK,$ so
\begin{align}
V_k(P) \leq \frac{1}{n} h\big([n]P- N\big) \leq \frac{1}{n} \sum_{i=1}^n h(P - P_i) \leq \max_{1\leq i \leq n} h(P-P_i) = W_k(P).
\end{align} 
\end{proof}

Since $W_k(P) = 0$ if and only if $P \in \GK^\di$, an immediate consequence of the second inequality in Theorem \ref{thma} is the following.
\begin{corollary}\label{noapprox}
Let $k$ be a subfield of $\QQbar$, and let $P \in \GQ$.  Then $V_k(P) = 0$ if and only if $P \in \GK^\di$.
\end{corollary}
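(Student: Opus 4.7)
The plan is to derive both implications directly from the ingredients already assembled in Theorem \ref{thma} and in the paragraph that defines $W_k$; the argument should indeed be very short, as the text promises with the word ``immediate.''

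For the easy direction, I will take $P \in \GK^\di$ and observe that $P$ itself then lies in the index set of the infimum defining $V_k(P) = \inf_{Q \in \GK^\di} h(P-Q)$. The choice $Q = P$ yields $h(P-P) = 0$ by property (1), and hence $V_k(P) = 0$.

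For the converse I will invoke the lower bound $V_k(P) \geq \tfrac{1}{2} W_k(P)$ from Theorem \ref{thma}. If $V_k(P) = 0$, then $W_k(P) = 0$; by the discussion immediately preceding Theorem \ref{thma}, this forces every Galois conjugate $P^\sigma$ (with $\sigma \in G_k$) to differ from $P$ by a torsion point, and summing these differences shows that some nonzero integer multiple of $P$ lies in $\mcG(k)$. Thus $P \in \GK^\di$.

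No substantive obstacle arises; the only point requiring a brief moment of care is the bookkeeping in the definition of $V_k$: it is the height relative to $\GK^\di$, not to $\GK$, which both legitimizes the choice $Q = P$ in the easy direction and prevents the corollary from becoming trivially false. Beyond this, the statement is a direct repackaging of the second inequality of Theorem \ref{thma} together with the standing characterization of the vanishing locus of $W_k$.
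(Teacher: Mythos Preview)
Your proposal is correct and matches the paper's approach exactly: the paper states the corollary as an immediate consequence of the second inequality in Theorem \ref{thma} together with the already-established characterization $W_k(P)=0 \iff P \in \GK^\di$, and you have simply spelled out both directions with the appropriate level of detail.
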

Corollary \ref{noapprox} states that, for any subfield $k$ of $\QQbar$, a point $P \in \GQ$ either lies in $\GK^\di$ or cannot be approximated (in the metric induced by the height) by such elements.  Note that \cite[Theorem 2]{dlmf} gives this result when $k$ is a number field and $\mcG = \GG_m.$  The general result for $\mcG = \GG_m$ is given in the preprint \cite{gv} (in that paper we generalize the result in a different direction).  The proof for our statement above with $\mcG$ a more general group is virtually identical.




\section{The number field case of Theorem \ref{thma} for $\GG_m$}\label{nfsec}
For the rest of the paper, we'll again restrict to the case $\mcG = \GG_m$, so for a subfield $k \subseteq \QQbar$, then, $\GK$ is simply the multiplicative group $k^\times$, and $h$ denotes the Weil height on nonzero algebraic numbers.  We'll write $k^\di$ as a shorthand for $(k^\times)^\di$.  Let $k$ be a number field, i.e. $[k:\QQ] < \infty$.  As discussed in the previous section, the inequality $V_k(\alpha) \geq \frac{1}{2}W_k(\alpha)$ implies that elements $\alpha$ which are not in $k^\di$ cannot be approximated by elements which are (Corollary \ref{noapprox}), and the ``gap'' is determined by the greatest distance from $\alpha$ to a conjugate, i.e. $W_k(\alpha)$.  By applying well-known height bounds of Dobrowolski type (these apply because $k$ is a number field) to estimate $W_k(\alpha)$, we find in this case a lower bound on $V_k(\alpha)$ in terms of the degree of $\alpha$.   

We briefly review some Dobrowolski type height estimates.  In the seminal work \cite{dobrowolski}, E. Dobrowolski showed that, for any non-torsion element $\alpha \in \QQbar^\times$ with $[\QQ(\alpha):\QQ] = d$, we have
\begin{equation}\label{dob}
h(\alpha) \geq \frac{c}{d}\left(\frac{\log \log 3d}{\log 3d}\right)^3
\end{equation}
for an absolute constant $c$.  The best known constant in a result of this type is the following bound:
\begin{equation}\label{vout}
h(\alpha) \geq \frac{1}{4d}\left(\frac{\log \log d}{\log d}\right)^3
\end{equation}
 valid for all $d > 2$, which was obtained by P. Voutier in \cite{voutier}.  Recall that the Lehmer conjecture is that the log factors can be removed from (\ref{dob}), and there is also the relative Lehmer conjecture, which states that we should have $h(\alpha) \geq c/D$, where $D = [k^{\ab}(\alpha):k^{\ab}]$ for some number field $k,$ and $c$ depends only on $k$.  In this direction F. Amoroso and U. Zannier have shown in \cite{amorosozannier2000} that
\begin{equation}\label{amza}
h(\alpha) \geq \frac{c(k)}{D}\left(\frac{\log \log 5D}{\log 2D}\right)^{13},
\end{equation}
where $c(k)$ is a positive constant depending only on the number field $k$.  This has been refined by Amoroso and E. Delsinne, who in \cite{amorosodelsinne} give the \emph{effective} lower bound
\begin{equation}\label{amdel}
h(\alpha) \geq \frac{1}{D_0}\frac{(\log \log 5D_0)^3}{(\log 2D_0)^4},
\end{equation}
where $D_0 = [\QQ^\ab(\alpha):\QQ^\ab];$ here we have taken the case $\mathbb{K} = \QQ$ of \cite[Th\'{e}or\`{e}me 1.3]{amorosodelsinne} for simplicity.

The following theorem applies such estimates to collect three of many possible lower bounds on $V_k(\alpha)$ in terms of the degree of $\alpha$.  We give two explicit lower bounds, and one without an explicit constant, but where the dependence is only on the degree $[k^\ab(\alpha):k^\ab]$.

\begin{theorem}\label{adob}
Let $k$ be a number field, and let $\alpha \in \gm$.  Let $d = [\QQ(\alpha):\QQ].$  If $\alpha \not \in k^\di$, then for $d >2$ we have
\begin{equation}\label{vout2}
V_k(\alpha) \geq \frac{1}{8d(d-1)} \left(\frac{\log \log d(d-1)}{\log d(d-1)}\right)^3.
\end{equation}  
Alternatively, writing $D_0 = [\QQ^{\ab}(\alpha):\QQ^{\ab}],$ we have for $D_0 >1$ that
\begin{equation}\label{amdel2}
V_k(\alpha) \geq \frac{1}{2D_0(D_0-1)} \frac{(\log \log 5D_0(D_0-1))^3}{(\log 2D_0(D_0-1))^4}. 
\end{equation}
Finally, writing $D = [k^\ab(\alpha):k^\ab]$, there is a constant $c(k)>0$ depending only on $k$ such that, for $D>1$, we have
\begin{equation}\label{amza2}
V_k(\alpha) \geq \frac{c(k)}{D(D-1)} \left(\frac{\log \log 5D(D-1)}{\log 2D(D-1)}\right)^{13}. 
\end{equation}
\end{theorem}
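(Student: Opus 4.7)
The plan is to combine Theorem \ref{thma}, which gives $V_k(\alpha) \geq \tfrac12 W_k(\alpha)$, with the Dobrowolski-style lower bounds (\ref{vout}), (\ref{amdel}), and (\ref{amza}), applied to an element of the form $\beta := \alpha^\sigma/\alpha$ for a well-chosen $\sigma \in G_k$. Because $\alpha \notin k^\di$, Corollary \ref{noapprox} combined with Theorem \ref{thma} implies $W_k(\alpha) > 0$, so we can choose $\sigma \in G_k$ for which $\beta$ is not a root of unity, giving $V_k(\alpha) \geq \tfrac12 h(\beta)$. It remains to lower-bound $h(\beta)$ using a Dobrowolski-type bound, which requires an upper bound on the appropriate degree of $\beta$.

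For (\ref{vout2}), the degree bound $[\QQ(\beta):\QQ] \leq d(d-1)$ follows from the standard compositum estimate: since $\alpha^\sigma$ is a $\QQ$-conjugate of $\alpha$ distinct from $\alpha$, its minimal polynomial over $\QQ(\alpha)$ divides the minimal polynomial of $\alpha$ stripped of the factor $(X-\alpha)$, which has degree $d-1$. Substituting $d(d-1)$ for the degree of $\beta$ in (\ref{vout}) (justified by monotonicity of the right-hand side on the range $x \geq d(d-1) \geq 6$) and multiplying by the factor $\tfrac12$ from Theorem \ref{thma} yields (\ref{vout2}).

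For (\ref{amdel2}) and (\ref{amza2}), I would run the analogous argument with $\QQ^\ab$ and $k^\ab$ in place of $\QQ$, using (\ref{amdel}) and (\ref{amza}) in place of (\ref{vout}). To obtain the ``$D_0-1$'' (respectively ``$D-1$'') improvement in the compositum bound, I would look for $\sigma$ not merely in $G_k$, but inside the smaller subgroup $G_L \subseteq G_k$, where $L = k^\cyc = k\cdot\QQ^\ab$ (respectively $L = k^\ab$). Any such $\sigma$ fixes $\QQ^\ab$ (resp.\ $k^\ab$) pointwise, so that $\alpha^\sigma$ is automatically a conjugate of $\alpha$ over that abelian field; the same min-poly-divided-by-$(X-\alpha)$ argument as before then produces $[\QQ^\ab(\beta):\QQ^\ab] \leq D_0(D_0-1)$ and $[k^\ab(\beta):k^\ab] \leq D(D-1)$. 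Such a $\sigma$ exists (with $\beta$ non-torsion) whenever $\alpha \notin L^\di$. In the residual case $\alpha \in L^\di \setminus k^\di$, one uses that $L/k$ is Galois and that $L$ contains $\mu_\infty$: if $\alpha^{D_L} \in L$ with $D_L = [L(\alpha):L] \leq D_0$ (resp.\ $\leq D$), then for any $\sigma \in G_k$ with $\beta$ non-torsion one has $\beta^{D_L} = (\alpha^{D_L})^\sigma/\alpha^{D_L} \in L$, so a Kummer-theoretic computation places $[L(\beta):L]$ well below the targets $D_0(D_0-1)$ and $D(D-1)$, absorbing any finite-index loss $[L:\QQ^\ab]$ into overall constants (or into $c(k)$ for (\ref{amza2})).

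With the degree bounds in hand, (\ref{amdel}) and (\ref{amza}) deliver $h(\beta)$-estimates of the required shape; monotonicity of their right-hand sides in the degree argument then produces (\ref{amdel2}) and (\ref{amza2}) after the factor $\tfrac12$ is inserted. The main obstacle I foresee is the bookkeeping in the abelian-degenerate case $\alpha \in L^\di \setminus k^\di$: ensuring the Kummer bound on $[L(\beta):L]$ is genuinely compatible with the stated right-hand sides requires care with the small-$D_0$ (resp.\ small-$D$) regime. The remaining steps -- the compositum calculations and the monotonicity checks -- are routine.
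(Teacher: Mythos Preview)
Your treatment of (\ref{vout2}) is exactly the paper's: pick $\tau\in G_k$ realizing $W_k(\alpha)$, bound $[\QQ(\tau\alpha/\alpha):\QQ]\le d(d-1)$ via $\QQ(\alpha,\tau\alpha)$, apply (\ref{vout}), and halve via Theorem~\ref{thma}.

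For (\ref{amdel2}) and (\ref{amza2}), however, you are working much harder than the paper. The paper does \emph{not} restrict $\sigma$ to the smaller group $G_L$ with $L=k^\cyc$ or $L=k^\ab$, and it does \emph{not} split into the cases $\alpha\notin L^\di$ and $\alpha\in L^\di\setminus k^\di$. It simply asserts, for the very same $\tau\in G_k$ already chosen, the analogous compositum bounds
\[
[\QQ^\ab(\alpha,\tau\alpha):\QQ^\ab]\le D_0(D_0-1),\qquad [k^\ab(\alpha,\tau\alpha):k^\ab]\le D(D-1),
\]
and then plugs into (\ref{amdel}) and (\ref{amza}). So the ``residual Kummer-theoretic'' analysis you flag as the main obstacle never arises in the paper's proof.

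Your extra care is not pointless, though: the ``min-poly divided by $(X-\alpha)$'' argument over $\QQ^\ab$ (resp.\ $k^\ab$) literally requires $\tau\alpha$ to be a $\QQ^\ab$-conjugate (resp.\ $k^\ab$-conjugate) of $\alpha$, which is not automatic for $\tau\in G_k$ since $\tau$ need not fix $\QQ^\ab$ or $k^\ab$ pointwise. The paper glosses over this. If one is content with $D_0^2$ and $D^2$ in place of $D_0(D_0-1)$ and $D(D-1)$ (which costs nothing in the applications and follows because $\QQ^\ab/\QQ$ and $k^\ab/k$ are Galois, so $[\QQ^\ab(\tau\alpha):\QQ^\ab]=D_0$ and $[k^\ab(\tau\alpha):k^\ab]=D$), then the paper's one-line approach is fully rigorous with no case split; your route via $G_L$ recovers the sharper ``$-1$'' at the price of the residual case you describe.
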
 
 
\begin{proof}
For any $\sigma \in G_k$, notice that 
\begin{equation}\label{dd-1}
[\QQ(\sigma \alpha/\alpha):\QQ] \leq [\QQ(\alpha,\sigma \alpha):\QQ] \leq d(d-1),
\end{equation}
since $\sigma \alpha$ satisfies a polynomial of degree $d-1$ over $\QQ(\alpha)$.  Similarly we have
\begin{align}
[\QQ^{\ab}(\sigma \alpha/\alpha):\QQ^{\ab}] &\leq [\QQ^{\ab}(\alpha,\sigma \alpha): \QQ^{\ab}] \leq D_0(D_0-1), ~\textup{and}\label{D_0D_0-1}\\
[k^{\ab}(\sigma \alpha/\alpha):k^{\ab}] &\leq [k^{\ab}(\alpha,\sigma \alpha): k^{\ab}] \leq D(D-1).
\end{align} 

Assume $\alpha \not \in k^\di$, so that $h(\sigma \alpha/\alpha) >0$ for some $\sigma \in G_k$, and so $W_k(\alpha) > 0$.  Let $\tau$ be an element of $G_k$ such that $W_k(\alpha) = h(\tau \alpha /\alpha)$. 
Applying (\ref{vout}) and (\ref{dd-1}) we have
\begin{align}\label{long1}
W_k(\alpha) = h\left(\frac{\tau \alpha}{\alpha}\right) &\geq \frac{1}{4d(d-1)} \left(\frac{\log \log [\QQ(\tau\alpha/\alpha):\QQ]}{\log ([\QQ(\tau\alpha/\alpha):\QQ])}\right)^3\\ 
&\geq \frac{1}{4d(d-1)} \left(\frac{\log \log d(d-1)}{\log d(d-1)}\right)^3. 
\end{align}
If $\alpha \not \in k^\di$, then by Theorem \ref{thma} we have $V_k(\alpha) \geq \frac{1}{2}W_k(\alpha)$, which yields (\ref{vout2}).
The relative bounds (\ref{amdel2}) and (\ref{amza2}) follow in the same way by applying (\ref{amdel}) and (\ref{amza}), respectively.

\end{proof}


\section{Proof of Theorem \ref{d1iskey}}

\begin{proof}
In assuming the degree one form of Conjecture \ref{gl} we are furnished with a lower bound $C_\Gamma^0$ on the heights of points of $K_\Gamma\setminus \Gamma$.  
The first case we consider is when $\alpha \not \in k^\di$, where by the inequality (\ref{amza2}) of Theorem \ref{adob} we have for any $\vep > 0$ a constant $c(k,\vep)$ such that
\begin{equation}
h_\Gamma (\alpha) \geq V_k(\alpha) \geq \frac{c(k,\vep)}{[k^{\ab}(\alpha):k^{\ab}]^{2+\vep}}.
\end{equation}
Note that $c(k,\vep)$ depends only on $\Gamma$ and $\vep$, and $[k^\ab(\alpha):k^\ab] \leq [k^\cyc(\alpha):k^\cyc]$, so the above inequality gives (\ref{part1}).  

Now assume $\alpha \in k^\di$, so some power of $\alpha$ lies in $k$.  If $\alpha^m$ lies in $k$ for some $m \geq 1$, and then certainly $\alpha^m$ lies in $K_\Gamma$, and we proceed as follows.  First, we'll assume $m$ is the least integer such that $\alpha^m \in K_\Gamma$.  Then, by the Capelli-Kneser Theorem (see \cite[Chapter 2, Theorems 19 and 20]{schinzelpoly}, noting that $K_\Gamma$ contains all roots of unity), the polynomial $x^m-\alpha^m$ is irreducible over $K_\Gamma$, and so $m = [K_\Gamma(\alpha):K_\Gamma]$.  
Now, using the degree one form of Conjecture 1.1 and the fact that $\alpha^m \beta^m \in K_\Gamma\setminus \Gamma$ whenever $\beta\in \Gamma$, we have
\begin{align}
h(\alpha) \geq h_\Gamma(\alpha) = \inf_{\beta \in \Gamma} h(\alpha\beta) = \inf_{\beta \in \Gamma} \frac{1}{m}h(\alpha^m\beta^m)= \frac{\inf_{\beta \in \Gamma} h(\alpha^m\beta^m)}{[K_\Gamma(\alpha):K_\Gamma]} \geq \frac{C_\Gamma^0}{[K_\Gamma(\alpha):K_\Gamma]},
\end{align}
which gives the bound (\ref{sf}) expected in the strong form of Conjecture \ref{gl}.  This is much stronger than the conclusion of Conjecture \ref{rem2} (note that $k^\cyc \subseteq K_\Gamma$), and completes our proof that the degree one form of Conjecture \ref{gl} implies Conjecture \ref{rem2}.

\end{proof} 
 
\bibliography{gvb}{}
\today
\end{document}